\newtheorem{theorem}{Theorem}
\theoremstyle{plain}
\newtheorem{corollary}{Corollary}
\newtheorem{definition}{Definition}
\newtheorem{example}{Example}
\newtheorem{lemma}{Lemma}
\newtheorem{proposition}{Proposition}
\numberwithin{equation}{section}
\begin{document}
\title[Two-sided ideals in locally $C^{\ast }$-algebras]{A note on two-sided
ideals in locally $C^{\ast }$-algebras}
\author{Alexander A. Katz}
\address{Dr. Alexander A. Katz, Department of Mathematics and Computer
Science, St. John's College of Liberal Arts and Sciences, St. John's
University, 8000 Utopia Parkway, St. John's Hall 334-i, NY 11439, USA}
\email{katza@stjohns.edu}
\date{April 7, 2013}
\thanks{2010 AMS Subject Classification: Primary 46K05, Secondary 46K10}
\keywords{$C^{\ast }$-algebras, locally $C^{\ast }$-algebras, projective
limit of projective family of $C^{\ast }$-algebras}

\begin{abstract}
In the present note we show that if $A$ is a locally $C^{\ast }$-algebra,
and $I$ and $J$ are closed two-sided ideals in $A$, then $%
(I+J)^{+}=I^{+}+J^{+}$.
\end{abstract}

\maketitle

\section{Introduction}

Let $A$ be a $C^{\ast }$-algebra, and $I$ and $J$ be closed $^{\ast }$%
-ideals in $A$. Let $I^{+}$ (resp. $J^{+}$) denotes the set of positive
elements in $I$ (resp. $J$). In 1964 in the first French edition of \cite%
{Dixmier1977} Dixmier has formulated a problem whether or not 
\begin{equation*}
(I+J)^{+}=I^{+}+J^{+}?
\end{equation*}
Using the results of Effros \cite{Effros1963} and Kadison \cite{Kadison1951},%
\cite{Kadison1965}, St\o rmer was able in 1967 to settle this problem in
affirmative in his paper \cite{Stormer1967}. In 1968 Pedersen has noticed
that it was Combes who obtained a different proof of the aforementioned
result of St\o rmer \cite{Stormer1967} as a corrolary of Pedersen's
Decomposition Theorem for $C^{\ast }$-algebra (see \cite{Pedersen1968} for
details). In 1971 Bunce has given in \cite{Bunce1971} yet another a very
short and elegant proof of the same result of St\o rmer from \cite%
{Stormer1967}.

The Hausdorff projective limits of projective families of Banach algebras as
natural locally-convex generalizations of Banach algebras have been studied
sporadically by many authors since 1952, when they were first introduced by
Arens \cite{Arens1952} and Michael \cite{Michael1952}. The Hausdorff
projective limits of projective families of $C^{\ast }$-algebras were first
mentioned by Arens \cite{Arens1952}. They have since been studied under
various names by many authors. Development of the subject is reflected in
the monograph of Fragoulopoulou \cite{Fragoulopoulou2005}. We will follow
Inoue \cite{Inoue1971} in the usage of the name \textbf{locally }$C^{\ast }$%
\textbf{-algebras} for these algebras.

The purpose of the present notes is to extend the aforementioned result of St%
\o rmer from \cite{Stormer1967} to locally $C^{\ast }$-algebras.

\section{Preliminaries}

First, we recall some basic notions on topological $^{\ast }$-algebras. A $%
^{\ast }$-algebra (or involutive algebra) is an algebra $A$ over $%
\mathbb{C}
$ with an involution 
\begin{equation*}
^{\ast }:A\rightarrow A,
\end{equation*}
such that 
\begin{equation*}
(a+\lambda b)^{\ast }=a^{\ast }+\overline{\lambda }b^{\ast },
\end{equation*}
and 
\begin{equation*}
(ab)^{\ast }=b^{\ast }a^{\ast },
\end{equation*}%
for every $a,b\in A$ and $\lambda \in $ $%
\mathbb{C}
$.

A seminorm $\left\Vert .\right\Vert $ on a $^{\ast }$-algebra $A$ is a $%
C^{\ast }$-seminorm if it is submultiplicative, i.e. 
\begin{equation*}
\left\Vert ab\right\Vert \leq \left\Vert a\right\Vert \left\Vert
b\right\Vert ,
\end{equation*}

and satisfies the $C^{\ast }$-condition, i.e. 
\begin{equation*}
\left\Vert a^{\ast }a\right\Vert =\left\Vert a\right\Vert ^{2},
\end{equation*}%
for every $a,b\in A.$ Note that the $C^{\ast }$-condition alone implies that 
$\left\Vert .\right\Vert $ is submultiplicative, and in particular 
\begin{equation*}
\left\Vert a^{\ast }\right\Vert =\left\Vert a\right\Vert ,
\end{equation*}%
for every $a\in A$ (cf. for example \cite{Fragoulopoulou2005}).

When a seminorm $\left\Vert .\right\Vert $ on a $^{\ast }$-algebra $A$ is a $%
C^{\ast }$-norm, and $A$ is complete in in the topology generated by this
norm, $A$ is called a $C^{\ast }$\textbf{-algebra}.

A topological $^{\ast }$-algebra is a $^{\ast }$-algebra $A$ equipped with a
topology making the operations (addition, multiplication, additive inverse,
involution) jointly continuous. For a topological $^{\ast }$-algebra $A$,
one puts $N(A)$ for the set of continuous $C^{\ast }$-seminorms on $A.$ One
can see that $N(A)$ is a directed set with respect to pointwise ordering,
because 
\begin{equation*}
\max \{\left\Vert .\right\Vert _{\alpha },\left\Vert .\right\Vert _{\beta
}\}\in N(A)
\end{equation*}%
for every $\left\Vert .\right\Vert _{\alpha },\left\Vert .\right\Vert
_{\beta }\in N(A),$ where $\alpha ,\beta \in \Lambda ,$ with $\Lambda $
being a certain directed set.

For a topological $^{\ast }$-algebra $A$, and $\left\Vert .\right\Vert
_{\alpha }\in N(A),$ $\alpha \in \Lambda $, 
\begin{equation*}
\ker \left\Vert .\right\Vert _{\alpha }=\{a\in A:\left\Vert a\right\Vert
_{\alpha }=0\}
\end{equation*}%
is a $^{\ast }$-ideal in $A$, and $\left\Vert .\right\Vert _{\alpha }$
induces a $C^{\ast }$-norm (we as well denote it by $\left\Vert .\right\Vert
_{\alpha }$) on the quotient $A_{\alpha }=A/\ker \left\Vert .\right\Vert
_{\alpha }$, and $A_{\alpha }$ is automatically complete in the topology
generated by the norm $\left\Vert .\right\Vert _{\alpha },$ thus is a $%
C^{\ast }$-algebra (see \cite{Fragoulopoulou2005} for details). Each pair $%
\left\Vert .\right\Vert _{\alpha },\left\Vert .\right\Vert _{\beta }\in
N(A), $ such that 
\begin{equation*}
\beta \succeq \alpha ,
\end{equation*}%
$\alpha ,\beta \in \Lambda ,$ induces a natural (continuous) surjective $%
^{\ast }$-homomorphism 
\begin{equation*}
g_{\alpha }^{\beta }:A_{\beta }\rightarrow A_{\alpha }.
\end{equation*}

Let, again, $\Lambda $ be a set of indices, directed by a relation
(reflexive, transitive, antisymmetric) $"\preceq "$. Let 
\begin{equation*}
\{A_{\alpha },\alpha \in \Lambda \}
\end{equation*}%
be a family of $C^{\ast }$-algebras, and $g_{\alpha }^{\beta }$ be, for 
\begin{equation*}
\alpha \preceq \beta ,
\end{equation*}%
the continuous linear $^{\ast }$-mappings 
\begin{equation*}
g_{\alpha }^{\beta }:A_{\beta }\longrightarrow A_{\alpha },
\end{equation*}%
so that 
\begin{equation*}
g_{\alpha }^{\alpha }(x_{\alpha })=x_{\alpha },
\end{equation*}%
for all $\alpha \in \Lambda ,$ and 
\begin{equation*}
g_{\alpha }^{\beta }\circ g_{\beta }^{\gamma }=g_{\alpha }^{\gamma },
\end{equation*}%
whenever 
\begin{equation*}
\alpha \preceq \beta \preceq \gamma .
\end{equation*}%
\ Let $\Gamma $ be the collections $\{g_{\alpha }^{\beta }\}$ of all such
transformations.\ Let $A$ be a $^{\ast }$-subalgebra of the direct product
algebra%
\begin{equation*}
\dprod\limits_{\alpha \in \Lambda }A_{\alpha },
\end{equation*}%
so that for its elements 
\begin{equation*}
x_{\alpha }=g_{\alpha }^{\beta }(x_{\beta }),
\end{equation*}%
for all%
\begin{equation*}
\alpha \preceq \beta ,
\end{equation*}%
where 
\begin{equation*}
x_{\alpha }\in A_{\alpha },
\end{equation*}%
and%
\begin{equation*}
x_{\beta }\in A_{\beta }.
\end{equation*}

\begin{definition}
The $^{\ast }$-algebra $A$ constructed above is called a \textbf{Hausdorff} 
\textbf{projective limit} of the projective family 
\begin{equation*}
\{A_{\alpha },\alpha \in \Lambda \},
\end{equation*}%
relatively to the collection%
\begin{equation*}
\Gamma =\{g_{\alpha }^{\beta }:\alpha ,\beta \in \Lambda :\alpha \preceq
\beta \},
\end{equation*}%
and is denoted by%
\begin{equation*}
\underleftarrow{\lim }A_{\alpha },
\end{equation*}%
$\alpha \in \Lambda ,$ and is called the Arens-Michael decomposition of $A$.
\end{definition}

It is well known (see, for example \cite{Treves1967}) that for each $x\in A,$
and each pair $\alpha ,\beta \in \Lambda ,$ such that $\alpha \preceq \beta
, $ there is a natural projection 
\begin{equation*}
\pi _{\beta }:A\longrightarrow A_{\beta },
\end{equation*}%
defined by 
\begin{equation*}
\pi _{\alpha }(x)=g_{\alpha }^{\beta }(\pi _{\beta }(x)),
\end{equation*}%
and each projection $\pi _{\alpha }$ for all $\alpha \in \Lambda $ is
continuous.

\begin{definition}
A topological $^{\ast }$-algebra $(A,\tau )$ over $\mathbb{C}$\ \ is called
a \textbf{locally }$C^{\ast }$\textbf{-algebra} if there exists a projective
family of $C^{\ast }$-algebras 
\begin{equation*}
\{A_{\alpha };g_{\alpha }^{\beta };\alpha ,\beta \in \Lambda \},
\end{equation*}%
so that 
\begin{equation*}
A\cong \underleftarrow{\lim }A_{\alpha },
\end{equation*}%
$\alpha \in \Lambda ,$ i.e. $A$ is topologically $^{\ast }$-isomorphic to a
projective limit of a projective family of $C^{\ast }$-algebras, i.e. there
exits its Arens-Michael decomposition of $A$ composed entirely of $C^{\ast }$%
-algebras.
\end{definition}

A topological $^{\ast }$-algebra $(A,\tau )$ over $\mathbb{C}$ is a locally $%
C^{\ast }$-algebra iff $A$ is a complete Hausdorff topological $^{\ast }$%
-algebra in which the topology $\tau $ is generated by a saturated
separating family $\digamma $ of $C^{\ast }$-seminorms (see \cite%
{Fragoulopoulou2005} for details).

\begin{example}
Every $C^{\ast }$-algebra is a locally $C^{\ast }$-algebra.
\end{example}

\begin{example}
A closed $^{\ast }$-subalgebra of a locally $C^{\ast }$-algebra is a locally 
$C^{\ast }$-algebra.
\end{example}

\begin{example}
The product $\dprod\limits_{\alpha \in \Lambda }A_{\alpha }$ of $C^{\ast }$%
-algebras $A_{\alpha }$, with the product topology, is a locally $C^{\ast }$%
-algebra.
\end{example}

\begin{example}
Let $X$ be a compactly generated Hausdorff space (this means that a subset $%
Y\subset X$ is closed iff $Y\cap K$ is closed for every compact subset $%
K\subset X$). Then the algebra $C(X)$ of all continuous, not necessarily
bounded complex-valued functions on $X,$ with the topology of uniform
convergence on compact subsets, is a locally $C^{\ast }$-algebra. It is well
known that all metrizable spaces and all locally compact Hausdorff spaces
are compactly generated (see \cite{Kelley1975} for details).
\end{example}

Let $A$ be a locally $C^{\ast }$-algebra. Then an element $a\in A$ is called 
\textbf{bounded}, if 
\begin{equation*}
\left\Vert a\right\Vert _{\infty }=\{\sup \left\Vert a\right\Vert _{\alpha
},\alpha \in \Lambda :\left\Vert .\right\Vert _{\alpha }\in N(A)\}<\infty .
\end{equation*}%
The set of all bounded elements of $A$ is denoted by $b(A).$

It is well-known that for each locally $C^{\ast }$-algebra $A,$ its set $%
b(A) $ of bounded elements of $A$ is a locally $C^{\ast }$-subalgebra, which
is a $C^{\ast }$-algebra in the norm $\left\Vert .\right\Vert _{\infty },$
such that it is dense in $A$ in its topology (see for example \cite%
{Fragoulopoulou2005}).

\subsection{The cone of positive elements in a locally C*-algebra.}

If $(A,\tau )$ is a unital topological $^{\ast }$-algebra, then the \textbf{%
spectrum} $sp_{A}(a)$ of an element $a\in A$ is the set 
\begin{equation*}
sp_{A}(a)=\{z\in 
\mathbb{C}
:z\mathbf{e}_{A}-a\notin G_{A}\},
\end{equation*}%
where $\mathbf{e}_{A}$ is a unital element in $A$, and $G_{A}$ is the group
of invertable elements in $A$.

An element $a\in A$ in a unital topological $^{\ast }$-algebra $(A,\tau )$
is called positive, and we write 
\begin{equation*}
a\geq \mathbf{0}_{A},
\end{equation*}%
if $a\in A_{sa},$ and 
\begin{equation*}
sp_{A}(a)\subseteq \lbrack 0,\infty ).
\end{equation*}%
We denote the set of positive elements is $(A,\tau )$ by $A^{+}.$

Let A,$\tau $ be a locally $C^{\ast }$-algebra, and 
\begin{equation*}
A\cong \underleftarrow{\lim }A_{\alpha },
\end{equation*}%
$\alpha \in \Lambda ,$ be its Arens-Michael decomposition. Then 
\begin{equation*}
a\geq \mathbf{0}_{A}\longleftrightarrow a_{\alpha }\geq \mathbf{0}%
_{A_{\alpha }},\text{ for all }\alpha \in \Lambda ,
\end{equation*}%
where $a_{\alpha }=\pi _{\alpha }(a).$

A \textbf{positive part} $A^{+}$ of any locally $C^{\ast }$-algebra is not
empty, and, together with each $a\in A_{sa},$ it contains positive elements $%
a^{+},a^{-}$ and $\left\vert a\right\vert ,$ such that 
\begin{equation*}
a=a^{+}-a^{-},
\end{equation*}%
\begin{equation*}
a^{+}a^{-}=\mathbf{0}_{A}=a^{-}a^{+},
\end{equation*}%
\begin{equation*}
\left\vert a\right\vert =a^{+}+a^{-}.
\end{equation*}

For any positive $a\in A^{+}$ in any locally $C^{\ast }$-algebra $A$ there
exists a unique \textbf{positive squire root} $b\in A^{+},$ such that 
\begin{equation*}
a=b^{2}.
\end{equation*}%
(see \cite{Fragoulopoulou2005} for details).

The following theorem is valid:

\begin{theorem}[Inoue-Schm\"{u}dgen]
Let $(A,\tau )$ be a locally $C^{\ast }$-algebra. The following are
equivalent:

\begin{enumerate}
\item $a\geq \mathbf{0}_{A};$

\item $a=b^{\ast }b,$ for some $b\in A;$

\item $a=c^{2},$ for some $c\in A_{sa}.$
\end{enumerate}
\end{theorem}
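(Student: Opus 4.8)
The plan is to prove the equivalence of the three conditions by leveraging the Arens-Michael decomposition $A\cong\underleftarrow{\lim}A_{\alpha}$ together with the already-stated characterization that $a\geq\mathbf{0}_{A}$ if and only if $\pi_{\alpha}(a)\geq\mathbf{0}_{A_{\alpha}}$ for every $\alpha\in\Lambda$. The key observation is that each $A_{\alpha}$ is an honest $C^{\ast}$-algebra, so the classical $C^{\ast}$-algebra version of this theorem holds in every $A_{\alpha}$; the task is therefore to transport the three equivalent local conditions up to the projective limit $A$ in a way that respects the continuous $^{\ast}$-homomorphisms $\pi_{\alpha}$.

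First I would establish the implications that require no reconstruction, namely $(3)\Rightarrow(2)\Rightarrow(1)$. For $(3)\Rightarrow(2)$, if $a=c^{2}$ with $c\in A_{sa}$, then $c=c^{\ast}$ gives $a=c^{\ast}c$ immediately, so we may take $b=c$. For $(2)\Rightarrow(1)$, given $a=b^{\ast}b$, I would apply $\pi_{\alpha}$ to obtain $\pi_{\alpha}(a)=\pi_{\alpha}(b)^{\ast}\pi_{\alpha}(b)$ in $A_{\alpha}$; since $A_{\alpha}$ is a $C^{\ast}$-algebra, the classical theorem yields $\pi_{\alpha}(a)\geq\mathbf{0}_{A_{\alpha}}$, and as this holds for every $\alpha$, the stated spectral characterization of positivity in the Arens-Michael decomposition gives $a\geq\mathbf{0}_{A}$.

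The substantive direction is $(1)\Rightarrow(3)$, and here I would use the existence and uniqueness of the positive square root stated just above the theorem: if $a\geq\mathbf{0}_{A}$, then there is a unique $b\in A^{+}$ with $a=b^{2}$, and since $b$ is positive it is in particular self-adjoint, so $b\in A_{sa}$ and we may set $c=b$. This closes the cycle $(1)\Rightarrow(3)\Rightarrow(2)\Rightarrow(1)$, establishing all three equivalences.

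The main obstacle is ensuring that the square-root construction invoked in $(1)\Rightarrow(3)$ is genuinely available as an element of $A$ rather than merely levelwise in each $A_{\alpha}$: one must check that the family $(\pi_{\alpha}(b))_{\alpha}$ of levelwise square roots is compatible with the connecting maps $g_{\alpha}^{\beta}$, so that it defines a bona fide element of the projective limit. This is precisely what the cited uniqueness of the positive square root (from \cite{Fragoulopoulou2005}) guarantees, since $g_{\alpha}^{\beta}$ is a $^{\ast}$-homomorphism and hence carries the square root of $\pi_{\beta}(a)$ to a positive square root of $\pi_{\alpha}(a)$, which must then coincide with $\pi_{\alpha}(b)$ by uniqueness. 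If one prefers to avoid citing the square root as a black box, the alternative is to build $b$ via the continuous functional calculus $t\mapsto\sqrt{t}$ applied coherently across the levels, but this amounts to the same compatibility check and I expect the cited result to be the cleanest route.
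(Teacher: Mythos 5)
Your proof is correct, but it takes a genuinely different route from the paper for the simplest possible reason: the paper does not prove this theorem at all, disposing of it with ``See \cite{Inoue1971} and \cite{Schmudgen1975} for details.'' What you do instead is reduce the statement to its classical $C^{\ast}$-algebra counterpart through the Arens-Michael decomposition: $(3)\Rightarrow(2)$ is pure algebra; $(2)\Rightarrow(1)$ follows by applying each $\pi_{\alpha}$, invoking the classical theorem in the $C^{\ast}$-algebra $A_{\alpha}$, and using the coordinatewise criterion $a\geq \mathbf{0}_{A}\Leftrightarrow \pi_{\alpha}(a)\geq \mathbf{0}_{A_{\alpha}}$ for all $\alpha$ stated in the paper's preliminaries; and $(1)\Rightarrow(3)$ invokes the positive square root quoted just before the theorem. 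One small point of bookkeeping: your final paragraph slightly conflates two routes. If you take the square-root statement (cited to \cite{Fragoulopoulou2005}) as a black box, then $b$ is already an honest element of $A$ and no compatibility check across levels is required; the argument you sketch --- that the levelwise positive square roots $b_{\alpha}$ of $\pi_{\alpha}(a)$ satisfy $g_{\alpha}^{\beta}(b_{\beta})=b_{\alpha}$ by uniqueness in each $A_{\alpha}$, hence assemble into an element of the projective limit --- is not a supplement to that citation but is precisely a proof of it (and a correct one). As for what each approach buys: your reduction makes the note self-contained modulo classical $C^{\ast}$-theory, and it is exactly the strategy the paper itself employs for its main theorem (push down to the $C^{\ast}$-algebra levels, apply St\o rmer's result, reassemble), whereas the cited original proofs of Inoue and Schm\"{u}dgen work intrinsically with the locally convex structure and do not presuppose the coordinatewise positivity criterion that your argument takes as input.
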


\begin{proof}
See \cite{Inoue1971} and \cite{Schmudgen1975} for details.
\end{proof}

As a corollary from this theorem one can see that $A^{+}=\{a^{\ast }a:a\in
A\}$ in each locally $C^{\ast }$-algebra $A$.

The following theorem is valid:

\begin{theorem}[Inoue-Schm\"{u}dgen]
Let $(A,\tau )$ be a locally $C^{\ast }$-algebra. Then $A^{+}$ is a closed
convex cone such that $A^{+}\cap (-A^{+})=\{\mathbf{0}_{A}\}.$
\end{theorem}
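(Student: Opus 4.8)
The plan is to reduce every assertion to the corresponding, classical fact about the fibre $C^{\ast }$-algebras $A_{\alpha }$, using the characterization of positivity recorded just above, namely $a\geq \mathbf{0}_{A}\Longleftrightarrow a_{\alpha }\geq \mathbf{0}_{A_{\alpha }}$ for all $\alpha \in \Lambda $, where $a_{\alpha }=\pi _{\alpha }(a)$. Since each $A_{\alpha }$ in the Arens--Michael decomposition $A\cong \underleftarrow{\lim }A_{\alpha }$ is a genuine $C^{\ast }$-algebra, the classical theory tells us that $A_{\alpha }^{+}$ is a closed convex cone with $A_{\alpha }^{+}\cap (-A_{\alpha }^{+})=\{\mathbf{0}_{A_{\alpha }}\}$; I intend to lift each of these three properties to $A$ one at a time, exploiting that the projections $\pi _{\alpha }$ are continuous linear $^{\ast }$-maps.

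First I would treat the convex-cone property. Given $a,b\in A^{+}$ and a real $\lambda \geq 0$, linearity of $\pi _{\alpha }$ gives $\pi _{\alpha }(\lambda a)=\lambda \pi _{\alpha }(a)$ and $\pi _{\alpha }(a+b)=\pi _{\alpha }(a)+\pi _{\alpha }(b)$. As $A_{\alpha }^{+}$ is a convex cone, both of these elements lie in $A_{\alpha }^{+}$ for every $\alpha $, so the characterization yields $\lambda a\in A^{+}$ and $a+b\in A^{+}$, whence $A^{+}$ is a convex cone. For closedness I would simply observe that the characterization says exactly $A^{+}=\bigcap_{\alpha \in \Lambda }\pi _{\alpha }^{-1}(A_{\alpha }^{+})$; since each $A_{\alpha }^{+}$ is closed in $A_{\alpha }$ and each $\pi _{\alpha }$ is continuous, every preimage $\pi _{\alpha }^{-1}(A_{\alpha }^{+})$ is closed in $A$, and an arbitrary intersection of closed sets is closed.

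For the pointedness $A^{+}\cap (-A^{+})=\{\mathbf{0}_{A}\}$, suppose $a\in A^{+}\cap (-A^{+})$. Then for each $\alpha $ we have $\pi _{\alpha }(a)\geq \mathbf{0}_{A_{\alpha }}$ and $-\pi _{\alpha }(a)=\pi _{\alpha }(-a)\geq \mathbf{0}_{A_{\alpha }}$, so $\pi _{\alpha }(a)\in A_{\alpha }^{+}\cap (-A_{\alpha }^{+})=\{\mathbf{0}_{A_{\alpha }}\}$, i.e. $\pi _{\alpha }(a)=\mathbf{0}_{A_{\alpha }}$ for every $\alpha \in \Lambda $. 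Since $A$ is realized as a $^{\ast }$-subalgebra of $\prod_{\alpha \in \Lambda }A_{\alpha }$ via $a\mapsto (\pi _{\alpha }(a))_{\alpha }$ (equivalently, since the defining family of $C^{\ast }$-seminorms is separating because $A$ is Hausdorff), the vanishing of all projections forces $a=\mathbf{0}_{A}$.

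I do not expect a serious obstacle here: once the fibrewise characterization of positivity is available, the whole argument is a routine componentwise reduction to the $C^{\ast }$-algebra case. The one point that genuinely requires care is the separation step in the pointedness argument, where the Hausdorff (separating) nature of the family $\{\pi _{\alpha }\}_{\alpha \in \Lambda }$ is indispensable; without it, vanishing of every $\pi _{\alpha }(a)$ would not force $a=\mathbf{0}_{A}$ and the intersection could be strictly larger than $\{\mathbf{0}_{A}\}$.
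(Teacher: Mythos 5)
Your proof is correct, but it takes a genuinely different route from the paper, for the simple reason that the paper does not actually prove this theorem: its ``proof'' consists entirely of the citation ``See \cite{Inoue1971} and \cite{Schmudgen1975} for details.'' What you have supplied is a real argument, reducing each of the three assertions (convex cone, closedness, pointedness) to the corresponding classical fact about the fibre $C^{\ast }$-algebras $A_{\alpha }$ via the equivalence $a\geq \mathbf{0}_{A}\Leftrightarrow \pi _{\alpha }(a)\geq \mathbf{0}_{A_{\alpha }}$ for all $\alpha \in \Lambda $, which the paper records (itself without proof) in the preliminaries. All three steps are sound: the cone property follows from linearity of the $\pi _{\alpha }$ and the two directions of the characterization; closedness follows from the identity $A^{+}=\bigcap_{\alpha \in \Lambda }\pi _{\alpha }^{-1}(A_{\alpha }^{+})$ together with continuity of the $\pi _{\alpha }$ and closedness of each $A_{\alpha }^{+}$; and pointedness follows because the coordinate projections of a projective limit are jointly separating, so an element all of whose components vanish is $\mathbf{0}_{A}$ (you rightly flag this as the step where the Hausdorff/separating hypothesis is indispensable). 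The one caveat worth stating is that your argument shifts essentially all of the weight onto the fibrewise positivity characterization, which is itself an Inoue--Schm\"{u}dgen result that the paper asserts without proof; in the original references that characterization and the cone properties are developed together, so your proof is only as self-contained as that standing fact. Within the logical framework of this paper, however, your derivation is legitimate, strictly more informative than the bare citation the paper offers, and it matches in spirit the componentwise-reduction strategy that the paper itself employs for its main theorem.
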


\begin{proof}
See \cite{Inoue1971} and \cite{Schmudgen1975} for details.
\end{proof}

\section{Positive parts of two-sided ideals in locally C*-algebras}

We start by recalling the following well-known result:

\begin{proposition}
Let $(A,\tau _{A})$ and $(B,\tau _{B})$ be two locally $C^{\ast }$-algebras,
and 
\begin{equation*}
\varphi :A\rightarrow B,
\end{equation*}%
be a $^{\ast }$-homomorphism. Then 
\begin{equation*}
\varphi (A^{+})=B^{+}\cap \varphi (A).
\end{equation*}
\end{proposition}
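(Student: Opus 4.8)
The plan is to establish the two inclusions separately. The inclusion $\varphi (A^{+})\subseteq B^{+}\cap \varphi (A)$ is routine, while $B^{+}\cap \varphi (A)\subseteq \varphi (A^{+})$ carries the real content.

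For the easy direction I would take $a\in A^{+}$ and invoke the Inoue--Schm\"{u}dgen characterization of positivity: there is $b\in A$ with $a=b^{\ast }b$. Since $\varphi $ is a $^{\ast }$-homomorphism, $\varphi (a)=\varphi (b)^{\ast }\varphi (b)$, which is positive in $B$ by the same characterization applied in $B$; as $\varphi (a)\in \varphi (A)$ trivially, this yields $\varphi (A^{+})\subseteq B^{+}\cap \varphi (A)$.

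For the reverse inclusion I would start from $y\in B^{+}\cap \varphi (A)$, say $y=\varphi (x)$ with $x\in A$. First I would reduce to self-adjoint $x$: because $y\geq \mathbf{0}_{B}$ it is self-adjoint, so replacing $x$ by its real part $\tfrac{1}{2}(x+x^{\ast })$, whose image under $\varphi $ is still $\tfrac{1}{2}(y+y^{\ast })=y$, we may assume $x=x^{\ast }$. Then I would use the decomposition from the preliminaries, $x=x^{+}-x^{-}$ with $x^{+},x^{-}\in A^{+}$ and $x^{+}x^{-}=x^{-}x^{+}=\mathbf{0}_{A}$. Applying $\varphi $ gives $y=\varphi (x^{+})-\varphi (x^{-})$, where both $\varphi (x^{\pm })$ are positive in $B$ (by the inclusion already proved) and $\varphi (x^{+})\varphi (x^{-})=\varphi (x^{+}x^{-})=\mathbf{0}_{B}$.

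The remaining step, showing $\varphi (x^{-})=\mathbf{0}_{B}$ so that $y=\varphi (x^{+})\in \varphi (A^{+})$, is the uniqueness of the orthogonal positive/negative decomposition, and this is the step I expect to be the main obstacle in the locally $C^{\ast }$ setting. I would settle it through the Arens--Michael decomposition and the componentwise description of positivity, namely $b\geq \mathbf{0}_{B}\longleftrightarrow b_{\alpha }\geq \mathbf{0}_{B_{\alpha }}$ for all $\alpha $, where $b_{\alpha }=\pi _{\alpha }(b)$. Fixing $\alpha $ and projecting the identity $y=\varphi (x^{+})-\varphi (x^{-})$ into the $C^{\ast }$-algebra $B_{\alpha }$, the two summands become positive elements of $B_{\alpha }$ with vanishing product while $y_{\alpha }\geq \mathbf{0}_{B_{\alpha }}$, so the classical uniqueness of this decomposition in a $C^{\ast }$-algebra forces $\pi _{\alpha }(\varphi (x^{-}))=\mathbf{0}_{B_{\alpha }}$. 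Since $\alpha $ is arbitrary and the defining family of $C^{\ast }$-seminorms on $B$ separates points, I would conclude $\varphi (x^{-})=\mathbf{0}_{B}$, which completes the proof.
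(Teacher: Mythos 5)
Your proof is correct, but there is nothing in the paper to compare it against line by line: the paper does not actually prove this proposition, it simply cites Fragoulopoulou's monograph \cite{Fragoulopoulou2005}. Your argument is a genuine, self-contained proof using only facts the paper states in its preliminaries. The forward inclusion via the Inoue--Schm\"{u}dgen characterization ($a=b^{\ast }b$ maps to $\varphi (b)^{\ast }\varphi (b)$) is exactly the standard step. For the reverse inclusion, your chain --- reduce to $x=x^{\ast }$ by taking the real part, decompose $x=x^{+}-x^{-}$ with $x^{+}x^{-}=\mathbf{0}_{A}$, push forward to get $y=\varphi (x^{+})-\varphi (x^{-})$ with orthogonal positive summands, then kill $\varphi (x^{-})$ component-by-component through the Arens--Michael decomposition using uniqueness of the orthogonal decomposition in each $C^{\ast }$-algebra $B_{\alpha }$ and the separating family of seminorms --- is sound, and it is philosophically the same reduction-to-the-$C^{\ast }$-case technique that the paper itself uses later for its main theorem and for Lemmata 2 and 5. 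Two small remarks: first, you could shorten the last step and avoid the componentwise argument entirely by working directly in $B$: writing $q=\varphi (x^{-})$, one has $qyq=(bq)^{\ast }(bq)\geq \mathbf{0}_{B}$ (where $y=b^{\ast }b$) while $qyq=q(\varphi (x^{+})-q)q=-q^{3}$, so $q^{3}\in B^{+}\cap (-B^{+})=\{\mathbf{0}_{B}\}$ by the paper's Theorem 2, whence $q=\mathbf{0}_{B}$ via the seminorms; second, your appeal to ``classical uniqueness'' in $B_{\alpha }$ deserves the one-line justification $(p_{\alpha }+q_{\alpha })^{2}=(p_{\alpha }-q_{\alpha })^{2}$, which gives $p_{\alpha }+q_{\alpha }=|y_{\alpha }|=y_{\alpha }$ and hence $q_{\alpha }=\mathbf{0}_{B_{\alpha }}$, but this is indeed standard $C^{\ast }$-theory and the step is valid as stated.
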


\begin{proof}
See for example \cite{Fragoulopoulou2005} for details.{}
\end{proof}

As a corrolary from that theorem one gets:

\begin{corollary}
Let $(B,\tau _{B})$ be a locally $C^{\ast }$-algebra, and $(A,\tau _{A})$ be
its closed *-subalgebra (with $\tau _{A}=\tau _{B}|_{A}$). Then 
\begin{equation*}
A^{+}=B^{+}\cap A.
\end{equation*}
\end{corollary}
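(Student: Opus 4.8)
The plan is to obtain the corollary as a direct specialization of the preceding Proposition by choosing the $^{\ast }$-homomorphism to be the inclusion map. Concretely, since $(A,\tau _{A})$ is a closed $^{\ast }$-subalgebra of $(B,\tau _{B})$ with $\tau _{A}=\tau _{B}|_{A}$, the inclusion
\begin{equation*}
\iota :A\hookrightarrow B,\qquad \iota (a)=a,
\end{equation*}
is a $^{\ast }$-homomorphism between two locally $C^{\ast }$-algebras (it is multiplicative, linear, involution-preserving, and continuous because the subspace topology coincides with the restriction of $\tau _{B}$). Thus the hypotheses of the Proposition are met with $\varphi =\iota $.

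First I would record that $\iota (A)=A$ as a subset of $B$, and that $\iota (A^{+})=A^{+}$, where the positivity on the left is computed in $A$ and on the right is viewed inside $B$. Here a small but essential point must be checked: the notion of positivity for an element of $A$ must not depend on whether the element is regarded as living in $A$ or in $B$. This follows from the spectral characterization of positivity together with the equivalent formulation in Theorem (Inoue-Schm\"{u}dgen), namely that $a\geq \mathbf{0}$ if and only if $a=b^{\ast }b$; since $A$ is a $^{\ast }$-subalgebra, an element of $A$ of the form $b^{\ast }b$ with $b\in A$ is automatically of that form in $B$ as well, and conversely. Hence $A^{+}$ is unambiguous and the symbol $\iota (A^{+})$ is exactly the positive cone of $A$.

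With this identification, the Proposition yields
\begin{equation*}
A^{+}=\iota (A^{+})=B^{+}\cap \iota (A)=B^{+}\cap A,
\end{equation*}
which is precisely the asserted equality. I expect the only real subtlety, and thus the step deserving the most care, to be the verification that positivity is intrinsic to $A$, i.e. that $B^{+}\cap A$ really coincides with the positive cone computed within $A$ itself; this is exactly what the $b^{\ast }b$ characterization provides and is what prevents the statement from being a triviality. The remaining steps — confirming that $\iota $ is a $^{\ast }$-homomorphism of locally $C^{\ast }$-algebras and that its image is $A$ — are routine.
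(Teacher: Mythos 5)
Your proposal is correct and takes essentially the paper's intended route: the corollary is the Proposition specialized to the inclusion $\iota \colon A\hookrightarrow B$, which is legitimate since a closed $^{\ast }$-subalgebra of a locally $C^{\ast }$-algebra is itself a locally $C^{\ast }$-algebra (the paper's own ``proof'' is only a citation to \cite{Fragoulopoulou2005}, but it frames the statement exactly as this corollary of the Proposition). One caution: your ``small but essential point'' is not an extra verification at all --- $\iota (A^{+})$ is literally the set $A^{+}$, so the Proposition's equality $\iota (A^{+})=B^{+}\cap \iota (A)$ \emph{is} the assertion that positivity is intrinsic to $A$; your attempted independent check of its converse direction (that $a=c^{\ast }c$ with $c\in B$ forces $a=b^{\ast }b$ with $b\in A$) is not automatic and would beg the question, but fortunately that aside is redundant rather than load-bearing.
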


\begin{proof}
See for example \cite{Fragoulopoulou2005} for details.
\end{proof}

Now, let $(A,\tau _{A})$ be an unital locally $C^{\ast }$-algebra, $I$ and $%
J $ be closed $^{\ast }$-ideals of $(A,\tau _{A})$, $B$ be a $C^{\ast }$%
-algebra, and 
\begin{equation*}
\varphi :A\rightarrow B,
\end{equation*}%
be a surjective continuous $^{\ast }$-homomorphism from $A$ onto $B$. In the
following series of lemmata we analyse what $\varphi (I),\varphi (J),\varphi
(A^{+}),\varphi (I^{+}),\varphi (J^{+}),\varphi (I^{+}+J^{+})$ and $\varphi
((I+J)^{+})$ are in $B$.

\begin{lemma}
Let $A,B,I$ $(J),\varphi $ be as above. Then $\varphi (I)$ (resp. $\varphi
(J)$) is a closed $^{\ast }$-ideal in $B$.
\end{lemma}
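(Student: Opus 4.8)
The plan is to split the statement into its algebraic content (that $\varphi(I)$ is a two-sided $^{\ast}$-ideal) and its topological content (that $\varphi(I)$ is closed), the latter carrying the real difficulty.

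For the algebraic part I would argue directly, using surjectivity of $\varphi$ in an essential way. As $\varphi$ is linear, $\varphi(I)$ is a linear subspace; as $\varphi$ is a $^{\ast}$-homomorphism and $I$ is self-adjoint, $\varphi(a)^{\ast}=\varphi(a^{\ast})\in \varphi(I)$ whenever $a\in I$, so $\varphi(I)$ is self-adjoint. To check that it is two-sided, take $b\in B$ and $\varphi(a)\in \varphi(I)$ with $a\in I$; since $\varphi$ is onto, write $b=\varphi(c)$ for some $c\in A$, so that $b\,\varphi(a)=\varphi(ca)$ and $\varphi(a)\,b=\varphi(ac)$, and both $ca,ac\in I$ because $I$ is an ideal of $A$. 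Hence $\varphi(I)$ is a two-sided $^{\ast}$-ideal of $B$.

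The heart of the matter is closedness. The key observation is that $I$ is itself a locally $C^{\ast}$-algebra --- a closed $^{\ast}$-subalgebra of $A$, by Example 2 --- and that the restriction $\varphi|_{I}\colon I\to B$ is a continuous $^{\ast}$-homomorphism into a $C^{\ast}$-algebra. I would show that any such map has closed range by factoring it through a single $C^{\ast}$-algebra of the Arens-Michael decomposition of $I$. Since $B$ is a $C^{\ast}$-algebra, $i\mapsto \Vert \varphi(i)\Vert_{B}$ is a continuous $C^{\ast}$-seminorm on $I$, hence is dominated by $\Vert\cdot\Vert_{\alpha}$ (restricted to $I$) for some $\alpha\in\Lambda$; consequently $\ker\Vert\cdot\Vert_{\alpha}\cap I\subseteq \ker \varphi|_{I}$, so $\varphi|_{I}$ descends to a $^{\ast}$-homomorphism $\overline{\varphi}\colon I_{\alpha}\to B$ with $\varphi|_{I}=\overline{\varphi}\circ q_{\alpha}$, where $I_{\alpha}=I/(\ker\Vert\cdot\Vert_{\alpha}\cap I)$ is a $C^{\ast}$-algebra and $q_{\alpha}$ is the canonical surjection. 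As $\overline{\varphi}$ is a $^{\ast}$-homomorphism between $C^{\ast}$-algebras, its range is a $C^{\ast}$-subalgebra of $B$, hence closed; and surjectivity of $q_{\alpha}$ gives $\varphi(I)=\overline{\varphi}(I_{\alpha})$. Therefore $\varphi(I)$ is closed, and with the previous paragraph it is a closed two-sided $^{\ast}$-ideal of $B$.

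The main obstacle I anticipate is exactly this closedness step, and within it the clean reduction to a genuine $C^{\ast}$-algebra. Two facts do the work: the factorization of a continuous $^{\ast}$-homomorphism through a quotient $C^{\ast}$-algebra $I_{\alpha}$ via the surjective projection $q_{\alpha}$, and the classical theorem that a $^{\ast}$-homomorphism between $C^{\ast}$-algebras has closed range (being isometric on the quotient by its kernel). Once these are combined, closedness of the continuous image $\varphi(I)$ --- which is not automatic for continuous maps --- follows, while the ideal and self-adjointness properties come for free from surjectivity of $\varphi$.
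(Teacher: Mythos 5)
Your proof is correct, and on the crucial point it takes a genuinely different --- and more complete --- route than the paper's. For the algebraic part you argue exactly as the paper does: surjectivity of $\varphi$ lets you write $b=\varphi (c)$ and push the ideal property of $I$ through $\varphi $. The divergence is in the closedness. The paper disposes of it in one sentence: since $\varphi $ is a continuous surjection and $I$ is a closed $^{\ast }$-subalgebra, $\varphi (I)$ is a closed $^{\ast }$-subalgebra of $B$. Read literally this is not a valid inference --- continuous maps, even continuous surjective $^{\ast }$-homomorphisms, do not in general send closed sets to closed sets --- so the paper is implicitly appealing to precisely the fact you prove: a continuous $^{\ast }$-homomorphism from a locally $C^{\ast }$-algebra into a $C^{\ast }$-algebra has closed range. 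Your factorization argument (dominate $\left\Vert \varphi (\cdot )\right\Vert _{B}$ on $I$ by a single $\left\Vert .\right\Vert _{\alpha }$ using directedness of the seminorm family, descend to $\overline{\varphi }:I_{\alpha }\rightarrow B$, then invoke the closed-range theorem for $^{\ast }$-homomorphisms of $C^{\ast }$-algebras) supplies the missing mechanism, and it is the same mechanism the paper itself needs elsewhere. Two points you should make explicit to be fully airtight: (i) $I_{\alpha }=I/(\ker \left\Vert .\right\Vert _{\alpha }\cap I)$ is complete --- hence honestly a $C^{\ast }$-algebra rather than a pre-$C^{\ast }$-algebra --- by the automatic-completeness fact recorded in the paper's preliminaries, applied to $I$, which is a locally $C^{\ast }$-algebra in its own right (Example 2); without completeness the closed-range theorem would not apply; (ii) the domination $\left\Vert \varphi (i)\right\Vert _{B}\leq C\left\Vert i\right\Vert _{\alpha }$ uses that the restricted family $\{\left\Vert .\right\Vert _{\alpha }|_{I}\}$ is directed and generates the subspace topology on $I$. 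With these noted, your argument is a rigorous replacement for the paper's one-line assertion, at the cost of being longer.
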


\begin{proof}
Because $\varphi $ is continuous surjection from $A$ onto $B$, and $I$ is a $%
^{\ast }$-subalgebra (as a $^{\ast }$-ideal), it follows that $\varphi (I)$
is a closed $^{\ast }$-subalgebra of $B$. It remained to show that if $x\in
B,$ then 
\begin{equation*}
x\cdot \varphi (I)\subset \varphi (I).
\end{equation*}%
Let $a$ be a fixed arbitrary element 
\begin{equation*}
a\in \varphi ^{-1}(x)\subset A
\end{equation*}%
(it means that $\varphi (a)=x$)$,$ and $b\in I.$ Because $I$ is an $^{\ast }$%
-ideal in $A$, it follows that 
\begin{equation*}
ab=c\in I.
\end{equation*}%
Therefore, from 
\begin{equation*}
\varphi (a)\cdot \varphi (b)=\varphi (ab)=\varphi (c)\in \varphi (I)
\end{equation*}%
it follows that 
\begin{equation*}
x\cdot \varphi (b)\in \varphi (I),
\end{equation*}%
for all $b\in I,$ Q.E.D.
\end{proof}

\begin{lemma}
Let $A,B,A^{+},B^{+}$ and $\varphi $ be as above. Then 
\begin{equation*}
\varphi (A^{+})=B^{+}.
\end{equation*}
\end{lemma}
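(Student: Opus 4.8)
The plan is to derive this as an immediate consequence of the Proposition proved above. Recall that every $C^{\ast }$-algebra is itself a locally $C^{\ast }$-algebra, so the Proposition applies verbatim to the $^{\ast }$-homomorphism $\varphi :A\rightarrow B$ and yields
\begin{equation*}
\varphi (A^{+})=B^{+}\cap \varphi (A).
\end{equation*}
The only additional ingredient needed is the surjectivity hypothesis: since $\varphi $ maps $A$ onto $B$ we have $\varphi (A)=B$, and therefore $B^{+}\cap \varphi (A)=B^{+}\cap B=B^{+}$. Combining the two equalities gives $\varphi (A^{+})=B^{+}$, as claimed.

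For a self-contained argument one would instead verify the two inclusions directly. The inclusion $\varphi (A^{+})\subseteq B^{+}$ follows because $^{\ast }$-homomorphisms preserve positivity: if $a\in A^{+}$ then, by the Inoue-Schm\"{u}dgen characterization, $a=b^{\ast }b$ for some $b\in A$, whence $\varphi (a)=\varphi (b)^{\ast }\varphi (b)\in B^{+}$. For the reverse inclusion $B^{+}\subseteq \varphi (A^{+})$ one takes $y\in B^{+}$ and writes $y=z^{\ast }z$ with $z\in B$; using surjectivity one chooses $x\in A$ with $\varphi (x)=z$ and observes that $x^{\ast }x\in A^{+}$ satisfies $\varphi (x^{\ast }x)=z^{\ast }z=y$.

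I expect no serious obstacle here: beyond the already-established Proposition, the entire content is the elementary observation that surjectivity collapses the intersection $B^{+}\cap \varphi (A)$ onto $B^{+}$. The only point requiring a modicum of care is confirming that the Proposition is applicable with a $C^{\ast }$-algebra as target, which is guaranteed by the remark that every $C^{\ast }$-algebra is a locally $C^{\ast }$-algebra.
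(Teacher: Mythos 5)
Your proposal is correct, and your primary argument is genuinely slicker than the paper's. The paper invokes the Proposition only to get the easy inclusion $\varphi (A^{+})=B^{+}\cap \varphi (A)\subset B^{+}$, and then proves the reverse inclusion $B^{+}\subset \varphi (A^{+})$ by hand: it takes $x\in B^{+}$, extracts a positive square root $y$ with $x=y^{2}$, lifts $y$ to some $b\in \varphi ^{-1}(y)$, and pushes $b^{2}$ forward. You instead observe that surjectivity already collapses the Proposition's conclusion: $\varphi (A)=B$ forces $B^{+}\cap \varphi (A)=B^{+}$, so the equality is immediate and no lifting argument is needed at all. (Your check that the Proposition applies, since a $C^{\ast }$-algebra is a locally $C^{\ast }$-algebra, is exactly the right point of care.) Your self-contained fallback argument essentially retraces the paper's route, but with one improvement worth noting: the paper asserts $b^{2}\in A^{+}$ for an \emph{arbitrary} preimage $b$ of $y$, which by the Inoue--Schm\"{u}dgen theorem requires $b\in A_{sa}$ --- something surjectivity does not guarantee. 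Your version lifts $z$ (with $y=z^{\ast }z$) to any $x\in A$ and uses $x^{\ast }x\in A^{+}$, which is positive for \emph{every} $x$ by condition (2) of the same theorem, so your direct argument quietly repairs this small gap in the paper's own proof.
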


\begin{proof}
From Proposition 1 it follows that 
\begin{equation*}
\varphi (A^{+})=B^{+}\cap \varphi (A)\subset B^{+}.
\end{equation*}%
Therefore, it is enough to show that 
\begin{equation*}
B^{+}\subset \varphi (A^{+}).
\end{equation*}%
Let $x$ be an arbitrary element in $B^{+}.$ Then (see for example \cite%
{Pedersen1979}) there exists a positive $y\in B^{+},$ such that 
\begin{equation*}
x=y^{2}.
\end{equation*}%
Let $b\in A$ be an arbitrary element in 
\begin{equation*}
\varphi ^{-1}(y)\subset A
\end{equation*}%
(it means that $\varphi (b)=y$)$.$ From Inoue-Schm\"{u}dgen Theorem 1 above
it follows that 
\begin{equation*}
b^{2}\in A^{+},
\end{equation*}%
and 
\begin{equation*}
\varphi (b^{2})=(\varphi (b))^{2}=y^{2}=x,
\end{equation*}%
Q.E.D.
\end{proof}

\begin{lemma}
Let $A,B,I$ (resp. $J$), $I^{+}$ (resp. $J^{+}$) and $\varphi $ be as above.
Then 
\begin{equation*}
\varphi (I^{+})=(\varphi (I))^{+}
\end{equation*}
(resp. $\varphi (J^{+})=(\varphi (J))^{+}$).
\end{lemma}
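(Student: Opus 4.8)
The plan is to reduce the statement to Proposition 1 by restricting $\varphi$ to the ideal $I$. First I would observe that by Lemma 1 the image $\varphi(I)$ is a closed $^{\ast}$-ideal of the $C^{\ast}$-algebra $B$; being a closed $^{\ast}$-subalgebra of a $C^{\ast}$-algebra, it is itself a $C^{\ast}$-algebra, and in particular a locally $C^{\ast}$-algebra. Likewise, $I$ is a closed $^{\ast}$-subalgebra of the locally $C^{\ast}$-algebra $A$, hence (by Example 2) a locally $C^{\ast}$-algebra in its own right. Thus both the domain and the codomain of the restricted map are legitimate objects for Proposition 1.

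Next I would consider the restriction $\varphi|_{I}\colon I\rightarrow \varphi(I)$, a continuous $^{\ast}$-homomorphism between locally $C^{\ast}$-algebras which is surjective onto its image $\varphi(I)$. Applying Proposition 1 to this map gives
\begin{equation*}
\varphi (I^{+})=(\varphi (I))^{+}\cap \varphi (I).
\end{equation*}
Since every positive element of $\varphi(I)$ already lies in $\varphi(I)$, we have $(\varphi(I))^{+}\subseteq \varphi(I)$, so the right-hand side collapses to $(\varphi(I))^{+}$, yielding the desired equality. Before invoking Proposition 1 I would make sure that the symbol $I^{+}$ — the positive cone of $I$ viewed as a locally $C^{\ast}$-algebra — agrees with $A^{+}\cap I$; this is exactly the content of Corollary 1, so the two readings of $I^{+}$ coincide and the reduction is legitimate. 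The bulk of the argument is therefore bookkeeping about which ambient algebra each positivity notion refers to, and the one point that genuinely needs checking is that $\varphi(I)$ is a $C^{\ast}$-algebra (so that $(\varphi(I))^{+}$ is meaningful), which is supplied by Lemma 1.

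Alternatively, I could mirror the two-inclusion argument of Lemma 2. The inclusion $\varphi(I^{+})\subseteq (\varphi(I))^{+}$ is immediate since a $^{\ast}$-homomorphism carries positive elements to positive elements. For the reverse inclusion I would take an arbitrary $x\in (\varphi(I))^{+}$, write $x=y^{2}$ with $y\in (\varphi(I))^{+}$ using the positive square root available in the $C^{\ast}$-algebra $\varphi(I)$, choose $b\in I$ with $\varphi(b)=y$, and observe that $b^{\ast}b\in I\cap A^{+}=I^{+}$ (using that $I$ is a $^{\ast}$-ideal together with Corollary 1) while $\varphi(b^{\ast}b)=y^{\ast}y=y^{2}=x$, so $x\in \varphi(I^{+})$. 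I expect the first, Proposition-1-based route to be the shorter one, with the only real obstacle being the careful identification of the positive cones of $I$ and $\varphi(I)$ with $A^{+}\cap I$ and $B^{+}\cap \varphi(I)$ via Corollary 1.
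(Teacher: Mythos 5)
Your first route is correct and is essentially the paper's own proof: the paper likewise treats the restriction of $\varphi$ to $I$ as a surjective map from the locally $C^{\ast}$-algebra $I$ onto the $C^{\ast}$-algebra $\varphi(I)$ and invokes its cone-surjectivity statement (Lemma 2, which for a surjective $^{\ast}$-homomorphism is exactly Proposition 1 with the intersection $(\varphi(I))^{+}\cap \varphi(I)$ collapsed). Your bookkeeping via Corollary 1, and your backup two-inclusion argument (which, by using $b^{\ast}b$ rather than $b^{2}$, is actually slightly more careful than the paper's proof of Lemma 2), only make explicit what the paper leaves implicit.
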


\begin{proof}
Because $I$ is a locally $C^{\ast }$-algebra, and $\varphi (I)$ is a $%
C^{\ast }$-algebra, the statement is immediately follows from Lemma 2 above.
\end{proof}

\begin{lemma}
Let $A,B,I,J,I^{+},J^{+}$ and $\varphi $ be as above. Then 
\begin{equation*}
\varphi (I^{+}+J^{+})=(\varphi (I))^{+}+((\varphi (J))^{+}
\end{equation*}
\end{lemma}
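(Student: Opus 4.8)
The statement $\varphi(I^{+}+J^{+})=(\varphi(I))^{+}+(\varphi(J))^{+}$ is essentially a bookkeeping identity that should follow by combining the additivity/linearity of $\varphi$ with the already-established Lemma 3. The plan is to prove the two inclusions separately, though I expect both directions to be nearly immediate once Lemma 3 is invoked, since the map $\varphi$ is in particular additive.

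For the inclusion $\subseteq$, I would take an arbitrary element of $I^{+}+J^{+}$, which has the form $p+q$ with $p\in I^{+}$ and $q\in J^{+}$. Since $\varphi$ is linear (being a $^{\ast}$-homomorphism), I compute
\begin{equation*}
\varphi(p+q)=\varphi(p)+\varphi(q).
\end{equation*}
By Lemma 3 applied to $I$ we have $\varphi(p)\in\varphi(I^{+})=(\varphi(I))^{+}$, and likewise $\varphi(q)\in\varphi(J^{+})=(\varphi(J))^{+}$. Hence $\varphi(p+q)\in(\varphi(I))^{+}+(\varphi(J))^{+}$, which gives $\varphi(I^{+}+J^{+})\subseteq(\varphi(I))^{+}+(\varphi(J))^{+}$.

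For the reverse inclusion $\supseteq$, I would run the same argument backwards. Let $u+v$ be an arbitrary element of $(\varphi(I))^{+}+(\varphi(J))^{+}$, with $u\in(\varphi(I))^{+}$ and $v\in(\varphi(J))^{+}$. By Lemma 3, $(\varphi(I))^{+}=\varphi(I^{+})$, so there exists $p\in I^{+}$ with $\varphi(p)=u$; similarly there is $q\in J^{+}$ with $\varphi(q)=v$. Then $p+q\in I^{+}+J^{+}$ and, by linearity of $\varphi$, $\varphi(p+q)=u+v$, so $u+v\in\varphi(I^{+}+J^{+})$. Combining the two inclusions yields the claimed equality.

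The only place requiring genuine care — the potential obstacle — is making sure that Lemma 3 really supplies the \emph{equality} $\varphi(I^{+})=(\varphi(I))^{+}$ (and not merely one inclusion), since the $\supseteq$ direction of the present lemma relies on surjectivity onto the positive cone of $\varphi(I)$; this is exactly what Lemma 3 guarantees, so no additional spectral or functional-calculus argument is needed here. Beyond that, the proof is purely formal, using nothing more than additivity of $\varphi$ and the identity established in Lemma 3.
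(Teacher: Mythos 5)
Your proof is correct and follows essentially the same route as the paper: the paper's own argument is precisely the setwise form of your two inclusions, namely that additivity of $\varphi$ gives $\varphi (I^{+}+J^{+})=\varphi (I^{+})+\varphi (J^{+})$, after which the identity $\varphi (I^{+})=(\varphi (I))^{+}$ from Lemma 3 finishes the proof. Your elementwise treatment of the two inclusions is just a more detailed spelling-out of the same one-line computation, and your observation that the full \emph{equality} in Lemma 3 (not merely an inclusion) is what makes the $\supseteq$ direction work is exactly the point on which the paper's argument rests.
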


\begin{proof}
Because $\varphi $ is a $^{\ast }$-homomorphism, from Lemma 2 above it
follows that 
\begin{equation*}
\varphi (I^{+}+J^{+})=\varphi (I^{+})+\varphi (J^{+})=(\varphi
(I))^{+}+((\varphi (J))^{+},
\end{equation*}%
Q.E.D.
\end{proof}

\begin{lemma}
Let $A,B,I,J,I^{+},J^{+}$ and $\varphi $ be as above. Then $\ $%
\begin{equation*}
\varphi ((I+J)^{+})=(\varphi (I)+\varphi (J))^{+}
\end{equation*}
\end{lemma}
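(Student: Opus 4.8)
The plan is to recognise that the asserted identity is nothing but the statement of Lemma 3 applied to the $^{\ast }$-ideal $K:=I+J$ in place of $I$, and to reduce it to Lemma 2 in exactly the same way. First I would record the purely algebraic fact that, since $\varphi $ is linear,
\[
\varphi (I+J)=\varphi (I)+\varphi (J),
\]
so that the right-hand side of the claim is $(\varphi (I+J))^{+}$ and the whole assertion reads $\varphi (K^{+})=(\varphi (K))^{+}$ with $K=I+J$.

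Next I would verify the two structural facts that make Lemma 2 applicable to the restriction $\varphi |_{K}\colon K\rightarrow \varphi (K)$. On the one hand, $K=I+J$ is a $^{\ast }$-ideal of $A$, being a sum of two $^{\ast }$-ideals, and I would argue that it is in fact a closed $^{\ast }$-ideal, hence a locally $C^{\ast }$-algebra by Example 2. On the other hand, by Lemma 1 both $\varphi (I)$ and $\varphi (J)$ are closed $^{\ast }$-ideals of the $C^{\ast }$-algebra $B$, and the sum of two closed ideals of a $C^{\ast }$-algebra is again closed, so $\varphi (K)=\varphi (I)+\varphi (J)$ is a closed $^{\ast }$-ideal of $B$ and therefore a $C^{\ast }$-algebra. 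With these in hand, $\varphi |_{K}$ is a surjective continuous $^{\ast }$-homomorphism of a locally $C^{\ast }$-algebra onto a $C^{\ast }$-algebra, and Lemma 2, in the form used in the proof of Lemma 3, yields at once $\varphi (K^{+})=(\varphi (K))^{+}$, which is the desired identity.

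I expect the main obstacle to be the closedness of $I+J$ in the locally $C^{\ast }$-algebra $A$: unlike the ideal property, this is the analytic input. While for $C^{\ast }$-algebras the sum of two closed ideals is classically closed, in the projective-limit setting one should justify it through the Arens--Michael decomposition $A\cong \underleftarrow{\lim }A_{\alpha }$, checking that $I+J$ corresponds to the projective limit of the ideals $I_{\alpha }+J_{\alpha }$, that each $I_{\alpha }+J_{\alpha }$ is closed in $A_{\alpha }$, and that the projective limit of a compatible family of closed ideals is closed.

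To sidestep this delicate point, I would fall back on a direct square-root argument using only the $^{\ast }$-ideal property of $I+J$ and the closedness of $\varphi (I)+\varphi (J)$ in $B$. The inclusion $\varphi ((I+J)^{+})\subseteq (\varphi (I)+\varphi (J))^{+}$ is immediate from Proposition 1 together with $\varphi (I+J)=\varphi (I)+\varphi (J)$ and Corollary 1. For the reverse inclusion, given $x\in (\varphi (I)+\varphi (J))^{+}$ I would write $x=y^{2}$ with $y$ positive in the $C^{\ast }$-algebra $\varphi (I)+\varphi (J)=\varphi (I+J)$, choose $b_{0}\in I+J$ with $\varphi (b_{0})=y$, replace it by its self-adjoint part $b=\tfrac{1}{2}(b_{0}+b_{0}^{\ast })\in I+J$, which still satisfies $\varphi (b)=y$ since $y=y^{\ast }$, and set $a=b^{2}$. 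Then $a\in I+J$ because $I+J$ is closed under products, $a\geq \mathbf{0}_{A}$ by the Inoue--Schm\"{u}dgen Theorem 1, and $\varphi (a)=y^{2}=x$; hence $x\in \varphi ((I+J)^{+})$, completing the argument.
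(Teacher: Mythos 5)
Your fallback argument is correct, and it overlaps with the paper's proof in one direction while improving on it in the other. The paper proves both inclusions by factorization: for the reverse inclusion it, like you, writes a positive element of the $C^{\ast }$-algebra $\varphi (I)+\varphi (J)$ as $(x+y)^{\ast }(x+y)$ with $x\in \varphi (I)$, $y\in \varphi (J)$ (citing Pedersen) and pulls back to $I$ and $J$; you instead pull back the positive square root to some $b_{0}\in I+J$, symmetrize to $b=\tfrac{1}{2}(b_{0}+b_{0}^{\ast })\in I+J$, and square --- the same idea, equally valid. The real difference is the forward inclusion: the paper invokes the Inoue--Schm\"{u}dgen theorem to write an arbitrary $c\in (I+J)^{+}$ as $(a+b)^{\ast }(a+b)$ with $a\in I$, $b\in J$, which tacitly requires the factorizing element (in effect the square root of $c$) to lie in $I+J$; that is essentially the closedness of $I+J$ in $A$, precisely the point you identified as the delicate analytic input. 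Your version avoids this entirely: positivity is preserved by $\varphi $ (Proposition 1), $\varphi (I+J)=\varphi (I)+\varphi (J)$ by linearity, and Corollary 1 applied to $\varphi (I)+\varphi (J)$ --- closed in $B$ as a sum of two closed ideals of a $C^{\ast }$-algebra, by Lemma 1 --- gives the inclusion. So your proof needs closedness only on the $B$ side, where it is classical, and is in that respect tighter than the paper's own argument; both proofs share the implicit convention that $(I+J)^{+}$ means $(I+J)\cap A^{+}$. Your primary route, reducing everything to Lemma 3 with $K=I+J$, is the natural structural idea but genuinely does require the closedness of $I+J$ in the locally $C^{\ast }$-algebra $A$, so you were right to flag it and fall back on the direct argument rather than assert it.
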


\begin{proof}
Let $c$ be an arbitrary element in $(I+J)^{+}.$ From Inoue-Schm\"{u}dgen
Theorem 1 above it follows that there exist $a\in I$ and $b\in J,$ such that 
\begin{equation*}
c=(a+b)^{\ast }(a+b).
\end{equation*}%
Let us denote 
\begin{equation*}
x=\varphi (a),\text{ and }y=\varphi (b).
\end{equation*}%
Then 
\begin{equation*}
\varphi (c)=\varphi ((a+b)^{\ast }(a+b))=\varphi ((a+b)^{\ast })\varphi
(a+b)=
\end{equation*}%
\begin{equation*}
=(\varphi (a)+\varphi (b))^{\ast }(\varphi (a)+\varphi (b))\in (\varphi
(I)+\varphi (J))^{+},
\end{equation*}%
thus 
\begin{equation*}
\varphi ((I+J)^{+})\subset (\varphi (I)+\varphi (J))^{+}.
\end{equation*}

Inversly, let $t$ be an arbitrary element 
\begin{equation*}
t\in (\varphi (I)+\varphi (J))^{+}.
\end{equation*}%
From the basic properties of the cone of positive elements in a $C^{\ast }$%
-algera (see for example \cite{Pedersen1979}) if follows that there exist 
\begin{equation*}
x\in \varphi (I),\text{ and }y\in \varphi (J),
\end{equation*}%
such that 
\begin{equation*}
t=(x+y)^{\ast }(x+y).
\end{equation*}%
Let $a$ be an arbitrary element in 
\begin{equation*}
\varphi ^{-1}(x)\subset I,
\end{equation*}%
and $b$ be an arbitrary element in 
\begin{equation*}
\varphi ^{-1}(y)\subset J.
\end{equation*}%
It means that 
\begin{equation*}
x=\varphi (a),\text{ and }y=\varphi (b).
\end{equation*}%
Then from Inoue-Schm\"{u}dgen Theorem 1 above it follows that the element 
\begin{equation*}
(a+b)^{\ast }(a+b)\in (I+J)^{+},
\end{equation*}%
and 
\begin{equation*}
\varphi ((a+b)^{\ast }(a+b))=\varphi ((a+b)^{\ast })\varphi (a+b)=(\varphi
(a)+\varphi (b))^{\ast }(\varphi (a)+\varphi (b))=
\end{equation*}%
\begin{equation*}
=(x+y)^{\ast }(x+y)=t,
\end{equation*}%
thus 
\begin{equation*}
\varphi ((I+J)^{+})\supset (\varphi (I)+\varphi (J))^{+},
\end{equation*}%
Q.E.D.
\end{proof}

Now we are ready to present the main theorem of the current notes.

\begin{theorem}
Let $(A,\tau _{A})$ be an unital locally $C^{\ast }$-algebra, and $(I,\tau
_{I})$ and $(J,\tau _{J})$ be two $^{\ast }$-ideals in $A,$ such that 
\begin{equation*}
\tau _{I}=\tau _{A}|_{I}\text{ and }\tau _{J}=\tau _{A}|_{J}.
\end{equation*}%
Then 
\begin{equation*}
(I+J)^{+}=I^{+}+J^{+}.
\end{equation*}
\end{theorem}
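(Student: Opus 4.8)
The plan is to reduce the statement about the locally $C^{\ast}$-algebra $A$ to the already-classical $C^{\ast}$-algebra result of St\o rmer, transporting everything through the Arens-Michael decomposition $A\cong\underleftarrow{\lim}A_{\alpha}$ and its canonical surjections $\pi_{\alpha}:A\to A_{\alpha}$. First I would note the easy inclusion $I^{+}+J^{+}\subseteq(I+J)^{+}$: since $I,J\subseteq I+J$ we have $I^{+},J^{+}\subseteq(I+J)^{+}$, and because $A^{+}$ (hence $(I+J)^{+}$, being $(I+J)\cap A^{+}$ by Corollary 1) is a convex cone by the second Inoue-Schm\"{u}dgen theorem, it is closed under addition, giving $I^{+}+J^{+}\subseteq(I+J)^{+}$. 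This half needs no coordinatewise argument.

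The substantive inclusion is $(I+J)^{+}\subseteq I^{+}+J^{+}$, and here I would work coordinate by coordinate. Fix $c\in(I+J)^{+}$ and, for each $\alpha\in\Lambda$, apply $\pi_{\alpha}$. Taking $\varphi=\pi_{\alpha}:A\to A_{\alpha}$ in the role of the surjective $^{\ast}$-homomorphism onto a $C^{\ast}$-algebra, Lemma 5 gives $\pi_{\alpha}\bigl((I+J)^{+}\bigr)=\bigl(\pi_{\alpha}(I)+\pi_{\alpha}(J)\bigr)^{+}$, so $c_{\alpha}:=\pi_{\alpha}(c)$ lies in $\bigl(\pi_{\alpha}(I)+\pi_{\alpha}(J)\bigr)^{+}$. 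Now $\pi_{\alpha}(I)$ and $\pi_{\alpha}(J)$ are closed $^{\ast}$-ideals in the $C^{\ast}$-algebra $A_{\alpha}$ by Lemma 1, so St\o rmer's theorem applies in $A_{\alpha}$ and yields $a_{\alpha}\in\pi_{\alpha}(I)^{+}$ and $b_{\alpha}\in\pi_{\alpha}(J)^{+}$ with $c_{\alpha}=a_{\alpha}+b_{\alpha}$. By Lemma 4, $\pi_{\alpha}(I^{+}+J^{+})=\pi_{\alpha}(I)^{+}+\pi_{\alpha}(J)^{+}$, so each $c_{\alpha}\in\pi_{\alpha}(I^{+}+J^{+})$.

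The remaining task is to glue these local decompositions into a single global one, and this is the step I expect to be the main obstacle. Knowing that $\pi_{\alpha}(c)\in\pi_{\alpha}(I^{+}+J^{+})$ for every $\alpha$ does not by itself produce a consistent family $(a_{\alpha})_{\alpha}$, $(b_{\alpha})_{\alpha}$ that is coherent under the connecting maps $g_{\alpha}^{\beta}$ — St\o rmer's decomposition in each $A_{\alpha}$ is not canonical, so the chosen $a_{\alpha},b_{\alpha}$ need not satisfy $g_{\alpha}^{\beta}(a_{\beta})=a_{\alpha}$. The cleanest way around this is to use uniqueness of a distinguished decomposition rather than an arbitrary one: for $c\in(I+J)^{+}$ the element $c$ is positive, and one can extract canonical summands (for instance via a continuous functional calculus or a Pedersen-type construction carried out in $b(A)$ or directly in the complete algebra $A$) whose images under each $\pi_{\alpha}$ agree with the local St\o rmer decomposition. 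Concretely, I would argue that $I^{+}+J^{+}$ is closed in $A$: each $I^{+}, J^{+}$ is closed, and using that $A^{+}$ is a closed cone together with the continuity of the projections, the sum $I^{+}+J^{+}=\{x\in A:\pi_{\alpha}(x)\in\pi_{\alpha}(I)^{+}+\pi_{\alpha}(J)^{+}\ \forall\alpha\}$ is precisely the set cut out by closed conditions in each coordinate. Since $\pi_{\alpha}(c)$ satisfies every such condition, $c$ lies in this intersection, hence $c\in I^{+}+J^{+}$. Making the characterization $x\in I^{+}+J^{+}\iff \pi_{\alpha}(x)\in\pi_{\alpha}(I)^{+}+\pi_{\alpha}(J)^{+}$ for all $\alpha$ rigorous — in particular the nontrivial $(\Leftarrow)$ direction, which is where coherence and completeness of $A$ must be invoked — is the crux, and I would devote the bulk of the proof to establishing it, after which the theorem follows by combining the two inclusions.
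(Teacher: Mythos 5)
Your overall route is the same as the paper's: pass to the Arens--Michael decomposition, apply Lemmas 1, 4 and 5 with $\varphi =\pi _{\alpha }$, and invoke St\o rmer's $C^{\ast }$-algebra theorem in each $A_{\alpha }$ to get $(I_{\alpha }+J_{\alpha })^{+}=I_{\alpha }^{+}+J_{\alpha }^{+}$. The easy inclusion $I^{+}+J^{+}\subseteq (I+J)^{+}$ and the coordinatewise reduction are carried out correctly (modulo one small unexamined point: applying Corollary 1 to $I+J$ presupposes that $I+J$ is closed, which is standard for $C^{\ast }$-algebras but is not addressed for locally $C^{\ast }$-algebras). However, your proof stops exactly where it becomes hard: you state the characterization that $x\in I^{+}+J^{+}$ iff $\pi _{\alpha }(x)\in \pi _{\alpha }(I)^{+}+\pi _{\alpha }(J)^{+}$ for all $\alpha $, concede that the right-to-left implication is the crux, and then leave it as something you ``would devote the bulk of the proof to.'' Your closedness remark does not fill this hole: showing that each condition $\pi _{\alpha }(x)\in I_{\alpha }^{+}+J_{\alpha }^{+}$ is a closed condition proves that the set they cut out is a closed set \emph{containing} $I^{+}+J^{+}$; it says nothing about the reverse containment, which is the one you need. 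What is actually required is to manufacture, from the non-canonical local decompositions $c_{\alpha }=a_{\alpha }+b_{\alpha }$ furnished by St\o rmer's theorem, a single coherent thread, i.e.\ a point of the projective limit of the sets $F_{\alpha }=\{(a,b)\in I_{\alpha }^{+}\times J_{\alpha }^{+}:a+b=c_{\alpha }\}$ under the connecting maps $g_{\alpha }^{\beta }\times g_{\alpha }^{\beta }$. Nonemptiness of every $F_{\alpha }$ does not by itself give nonemptiness of their projective limit; one needs an extra ingredient (a compactness or Mittag--Leffler-type argument, or a canonical choice of decomposition stable under the $g_{\alpha }^{\beta }$), and no such ingredient is supplied. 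So the proposal, as written, is a correct reduction plus an unproved claim: a genuine gap.

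You should know, however, that you have put your finger on precisely the step that the paper itself passes over. After obtaining $(I_{\alpha }+J_{\alpha })^{+}=I_{\alpha }^{+}+J_{\alpha }^{+}$ for all $\alpha $, the paper simply asserts $(I+J)^{+}=\underset{\longleftarrow }{\lim }(I+J)_{\alpha }^{+}$, $I^{+}=\underset{\longleftarrow }{\lim }I_{\alpha }^{+}$ and $J^{+}=\underset{\longleftarrow }{\lim }J_{\alpha }^{+}$, and then concludes. Even granting these three identities, the conclusion requires interchanging a sum with a projective limit, namely $\underset{\longleftarrow }{\lim }(I_{\alpha }^{+}+J_{\alpha }^{+})=\underset{\longleftarrow }{\lim }I_{\alpha }^{+}+\underset{\longleftarrow }{\lim }J_{\alpha }^{+}$, and the nontrivial inclusion there is exactly the coherence problem you flagged. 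In other words, your plan matches the paper's proof step for step; the difference is that you explicitly identify the gluing step as open, while the paper asserts it without argument. Completing either version of the argument requires actually proving that gluing statement.
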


\begin{proof}
Let now $(A,\tau )$ be a locally $C^{\ast }$-algebra, and let 
\begin{equation*}
A=\underleftarrow{\lim }A_{\alpha },
\end{equation*}%
$\alpha \in \Lambda $, be its Arens-Michael decomposition, built using the
family of seminorms $\left\Vert .\right\Vert _{\alpha },\alpha \in \Lambda ,$
that defines the topology $\tau .$ Let 
\begin{equation*}
\pi _{\alpha }:A\rightarrow A_{\alpha },
\end{equation*}%
$\alpha \in \Lambda ,$ be a projection from $A$ onto $A_{\alpha },$ for each 
$\alpha \in \Lambda .$ Each $\pi _{\alpha }$ is an injective $^{\ast }$%
-homomorphism from $A$ onto $A_{\alpha },\alpha \in \Lambda ,$ thus, we can
apply to $A,$ $A_{\alpha },$ and $\pi _{\alpha }$ lemmata 3-5 above for all $%
\alpha \in \Lambda .$ Let 
\begin{equation*}
I_{\alpha }=\pi _{\alpha }(I)
\end{equation*}%
(resp. $J_{\alpha }=\pi _{\alpha }(J)$). Therefore, for all $\alpha \in
\Lambda ,$ St\o rmer's result from \cite{Stormer1967} for $C^{\ast }$%
-algebras implies that%
\begin{equation*}
(I_{\alpha }+J_{\alpha })^{+}=I_{\alpha }^{+}+J_{\alpha }^{+}.
\end{equation*}%
Thus, because 
\begin{equation*}
(I+J)^{+}=\underset{\longleftarrow }{\lim }(I+J)_{\alpha }^{+},
\end{equation*}%
\begin{equation*}
I^{+}=\underset{\longleftarrow }{\lim }I_{\alpha }^{+},
\end{equation*}%
and 
\begin{equation*}
J^{+}=\underset{\longleftarrow }{\lim }J_{\alpha }^{+},
\end{equation*}%
$\alpha \in \Lambda ,$ we get 
\begin{equation*}
(I+J)^{+}=I^{+}+J^{+},
\end{equation*}%
Q.E.D.
\end{proof}


\begin{thebibliography}{99}
\bibitem{Arens1952} \textbf{Arens, R.,} \textit{A generalization of normed
rings.} (English) Pacific J. Math., Vol. 2 (1952), pp. 455--471.

\bibitem{Bunce1971} \textbf{Bunce, J.,} \textit{A note on two-sided ideals
in }$C^{\ast }$\textit{-algebras.} (English) Proc. Amer. Math. Soc. Vol. 28
(1971), pp. 635.

\bibitem{Dixmier1977} \textbf{Dixmier, J.,} $C^{\ast }$\textit{-algebras.}
(Engliah) Translated from the French by Francis Jellett. North-Holland
Mathematical Library, Vol. 15. North-Holland Publishing Co., Amsterdam-New
York-Oxford (1977), 492 pp.

\bibitem{Effros1963} \textbf{Effros, E.G.,} \textit{Order ideals in a }$%
C^{\ast }$\textit{-algebra and its dual.} (English) Duke Math. J. Vol. 30
(1963), pp. 391--411.

\bibitem{Fragoulopoulou2005} \textbf{Fragoulopoulou, M.,} \textit{%
Topological algebras with involution.} (English) North-Holland Mathematics
Studies, Vol. 200, Elsevier Science B.V., Amsterdam (2005), 495 pp.

\bibitem{Inoue1971} \textbf{Inoue, A.,} \textit{Locally }$C^{\ast }$\textit{%
-algebra. }(English), Mem. Fac. Sci. Kyushu Univ. Ser. A , Vol. 25 (1971),
pp. 197--235.

\bibitem{Kadison1951} \textbf{Kadison, R.V.,} \textit{A representation
theory for commutative topological algebra.} (English) Mem. Amer. Math. Soc.
(1951), No. 7, 39 pp.

\bibitem{Kadison1965} \textbf{Kadison, R.V.,} \textit{Transformations of
states in operator theory and dynamics.} (English) Topology Vol. 3 (1965)
suppl. No. 2, pp. 177--198.

\bibitem{Kelley1975} \textbf{Kelley, J.L.,} \textit{General topology.}
(English) Reprint of the 1955 edition [Van Nostrand, Toronto, Ont.].
Graduate Texts in Mathematics, No. 27. Springer-Verlag, New York-Berlin
(1975), 298 pp

\bibitem{Michael1952} \textbf{Michael, E.A.,} \textit{Locally
multiplicatively-convex topological algebras.} (English) Mem. Amer. Math.
Soc., No. 11 (1952), 79 pp.

\bibitem{Pedersen1968} \textbf{Pedersen, G.K.,} \textit{A decomposition
theorem for }$C^{\ast }$\textit{\ algebras.} (English) Math. Scand. Vol. 22
(1968), pp. 266--268

\bibitem{Pedersen1979} \textbf{Pedersen, G.K.,} $C^{\ast }$\textit{-algebras
and their automorphism groups.} (English) London Mathematical Society
Monographs, Vol. 14. Academic Press, Inc. [Harcourt Brace Jovanovich,
Publishers], London-New York (1979), 416 pp.

\bibitem{Schmudgen1975} \textbf{Schm\"{u}dgen, K.,} \textit{Uber }$LMC^{\ast
}$\textit{-Algebren.} (German) Math. Nachr. Vol. 68 (1975), pp. 167--182.

\bibitem{Stormer1967} \textbf{St\o rmer, E.,} \textit{Two-sided ideals in }$%
C^{\ast }$\textit{-algebras.} (English) Bull. Amer. Math. Soc. Vol. 73
(1967), pp. 254--257.

\bibitem{Treves1967} \textbf{Tr\`{e}ves, F.,} \textit{Topological vector
spaces: Distributions and Kernels.} (English), New York-London: Academic
Press. (1967), 565 pp.
\end{thebibliography}
\end{document}